\date{\today}
\newcommand{\N}{{\mathbb{N}}}
\newcommand{\PP}{{\mathbb P}}
\newcommand{\R}{{\mathbb R}}
\newcommand{\T}{{\mathbb T}}
\newcommand{\X}{{\mathbb S}}
\newcommand{\Z}{{\mathbb Z}}
\newcommand{\ED}{{\mathcal{ED}}}
\newcommand{\rot}{{\mathrm{rot}}}
\renewcommand{\ker}{{\mathrm{Ker}}}
\newtheorem{theorem}{Theorem}[section]
\newtheorem{lemma}[theorem]{Lemma}
\newtheorem{prop}[theorem]{Proposition}
\theoremstyle{definition}
\newtheorem{claim}{Claim}[theorem]
\newtheorem{remark}[theorem]{Remark}
\theoremstyle{plain}
\newenvironment{claimproof}[1][Proof of Claim]{\noindent \underline{#1.} }{\hfill$\diamondsuit$}
\numberwithin{equation}{section}
\DeclareMathOperator{\supp}{supp}
\newcommand{\set}[1]{\left\{#1\right\}}
\definecolor{purple}{rgb}{.5,0,1}
\begin{document}

\title[The Essential Spectrum of the Doubling Map Model is Connected]{The Almost Sure Essential Spectrum of the\\Doubling Map Model is Connected}

\author{David Damanik}
\address{Department of Mathematics, Rice University, Houston, TX~77005, USA}
\email{damanik@rice.edu}
\thanks{D.D.\ was supported in part by NSF grants DMS--1700131 and DMS--2054752 and Simons Fellowship $\# 669836$}

\author{Jake Fillman}
\address{Department of Mathematics, Texas State University, San Marcos, TX 78666, USA}
\email{fillman@txstate.edu}
\thanks{J.F.\ was supported in part by Simons Foundation Collaboration Grant \#711663.}

\maketitle

\begin{abstract}
We consider discrete Schr\"odinger operators on the half line with potentials generated by the doubling map and continuous sampling functions. We show that the essential spectrum of these operators is always connected. This result is obtained by computing the subgroup of the range of the Schwartzman homomorphism associated with homotopy classes of continuous maps on the suspension of the standard solenoid that factor through the suspension of the doubling map and then showing that this subgroup characterizes the topological structure of the spectrum.
\end{abstract}


\hypersetup{
	linkcolor={black!30!blue},
	citecolor={red},
	urlcolor={black!30!blue}
}

\section{Introduction}

The doubling map model is the discrete half-line Schr\"odinger operator 
\begin{equation}\label{e.dmoper}
[H_\omega \psi](n) = \psi(n+1) + \psi(n-1) + f(2^n \omega) \psi(n), \quad n \geq 0
\end{equation}
in $\ell^2(\Z_+)$, where $\omega \in \T = \R/\Z$, $f \in C(\T,\R)$, and $\psi \in \ell^2(\Z_+)$, with the boundary condition $\psi(-1)=0$. This family has been studied by Chulaevsky--Spencer \cite{ChulaevskySpencer1995}, Bourgain--Schlag \cite{BourgainSchlag2000CMP}, Damanik--Killip \cite{DamKil2005b}, Zhang \cite{Zhang2016}, Bjerkl\"ov \cite{Bjerkloev2020}, and Avila-Damanik-Zhang \cite{AviDamZha2020}, among others. 

These authors were primarily interested in the spectral type of this operator, but to make the main result of \cite{BourgainSchlag2000CMP} meaningful, Bourgain and Schlag had to prove the following statement about the spectrum of this operator: for Lebesgue almost all $\omega \in \T$, the spectrum $\sigma(H_\omega)$ contains the interval $[-2+f(0),2+f(0)]$. We are interested in a related result concerning the global topological structure of the spectrum.

The doubling map model is a prominent example of an ergodic family of Schr\"odinger operators. Such a family is generated by an ergodic measurable dynamical system $(\Omega,T,\mu)$ and a measurable and (for simplicity) bounded $f : \Omega \to \R$. One generates potentials via
\begin{equation}\label{e.pot}
V_\omega(n) = f(T^n \omega)
\end{equation}
and bounded self-adjoint operators via
\begin{equation}\label{e.oper}
[H_\omega \psi](n) = \psi(n+1) + \psi(n-1) + V_\omega(n) \psi(n).
\end{equation}
The doubling map model arises upon choosing $\Omega = \T$, $T \omega = 2 \omega$, and $\mu = \mathrm{Leb}$.

Two general remarks are in order. If the transformation $T$ is invertible, then \eqref{e.pot} can be defined for any $n \in \Z$ and then the operators \eqref{e.oper} are usually considered in $\ell^2(\Z)$. In the case of the doubling map, the non-invertibility of $T$ suggests that we initially define the operators in $\ell^2(\Z_+)$. However, as we will see later, it is still desirable to pass to an associated whole line model. This strategy was also put to use by Damanik and Killip in \cite{DamKil2005b}. The second remark is that in cases where $\Omega$ is a compact metric space and $T$ is continuous, one often restricts attention to continuous sampling functions $f : \Omega \to \R$. The reasoning underlying both remarks the same: the general theory is nicer in the invertible case (e.g., the spectrum is $\mu$-almost sure constant and the discrete spectrum is $\mu$-almost surely empty and several important aspects of Kotani theory need the presence of two half lines) and for continuous sampling function in the topological setting (e.g., the spectrum is globally independent of $\omega$ if $T$ is minimal).

Returning to the case of the doubling map, given these two remarks, we are especially interested in continuous $f$ and will restrict attention to those. Moreover, the best we can say is that the essential spectrum of $H_\omega$ is Lebesgue almost surely independent of $\omega$, while the discrete spectrum may be present and depend sensitively on $\omega$. We will denote the almost sure essential spectrum by
\begin{equation}\label{e.dmasespec}
\Sigma_f = \sigma_\mathrm{ess}(H_\omega) \quad \mu-\text{a.s.}
\end{equation}

Here is the main result of this note:

\begin{theorem}\label{t.dmspec}
For every $f \in C(\T,\R)$, the almost sure essential spectrum of the doubling map model $\Sigma_f$ is connected.
\end{theorem}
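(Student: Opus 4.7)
The plan is to pass to the natural invertible extension of the doubling map, apply a Johnson-type gap labeling theorem for the resulting whole-line family, and compute the relevant Schwartzman subgroup of $\R$; the outcome will be that the only admissible gap labels are $0$ and $1$, which forces connectedness.

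First I would pass to the whole line. The natural extension of $(\T, T)$ with $T\omega = 2\omega$ is the dyadic solenoid $\widehat \T$ with an induced homeomorphism $\widehat T$, and the canonical projection $\pi : \widehat \T \to \T$ intertwines $\widehat T$ with $T$. Lifting $f$ via $\pi$ to $\widetilde f = f \circ \pi \in C(\widehat \T, \R)$ produces an ergodic whole-line Schr\"odinger family $\widetilde H_{\widehat \omega}$ on $\ell^2(\Z)$ indexed by $\widehat \omega \in \widehat \T$; by the extension argument of Damanik--Killip \cite{DamKil2005b}, its almost sure spectrum equals $\Sigma_f$.

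Next I would invoke a gap labeling theorem for $\{\widetilde H_{\widehat \omega}\}$: the integrated density of states $N(\cdot)$ is constant on each component of $\R \setminus \Sigma_f$, and its value there lies in the image of the Schwartzman homomorphism $\schwartzmanGroup : [\Sigma_{\widehat T}\widehat \T, S^1] \to \R$ associated with the suspension flow on the mapping torus of $\widehat T$. Since the Schr\"odinger cocycle
\[
A_E(\omega) = \begin{pmatrix} E - f(\omega) & -1 \\ 1 & 0 \end{pmatrix}
\]
reads the sampling function only through the $\T$-coordinate of $\widehat \omega$, the homotopy classes contributing to gap labels are exactly those represented by maps $\Sigma_{\widehat T}\widehat \T \to S^1$ factoring through the mapping torus $\Sigma_T \T$ of the doubling map, so it suffices to compute the Schwartzman image of this factor-through subgroup. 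To that end I would apply the Wang exact sequence to the fibration $\T \to \Sigma_T \T \to S^1$ with monodromy $T$. Because $T^*$ acts on $H^1(\T, \Z) = \Z$ by multiplication by $2$, both the invariants and the coinvariants of $T^* - \mathrm{id}$ on $H^1(\T, \Z)$ vanish, leaving only the contribution from $H^0(\T, \Z)$, and one obtains $H^1(\Sigma_T \T, \Z) \cong \Z$ with generator represented by the bundle projection $\Sigma_T \T \to S^1$; the Schwartzman number of this generator equals $1$, so the subgroup of admissible gap labels is exactly $\Z$.

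The conclusion is then immediate: since $N(E) \in [0,1]$ and $\Z \cap [0,1] = \{0, 1\}$, every component of $\R \setminus \Sigma_f$ satisfies $N \equiv 0$ or $N \equiv 1$; such a component lies entirely below $\inf \Sigma_f$ or entirely above $\sup \Sigma_f$, and $\Sigma_f$ is connected. The main obstacle I anticipate is the rigorous execution of the middle step: the solenoid extension $(\widehat \T, \widehat T)$ is ergodic but not minimal or uniquely ergodic, whereas Johnson-style gap labeling is classically formulated under minimality assumptions. Tracing how the cocycle $A_E$ defines a class in $H^1$ of the solenoidal suspension, verifying that this class lies in the pullback of $H^1(\Sigma_T \T, \Z)$, and adapting the gap labeling statement to the present non-minimal setting is the principal technical work.
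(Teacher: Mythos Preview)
Your proposal is correct and follows essentially the same route as the paper: pass to the solenoid extension, identify $\Sigma_f$ with the whole-line almost sure spectrum, invoke Johnson-type gap labelling, show the stable section factors through the suspension of $(\T,T)$ because the cocycle depends only on the $\T$-coordinate, and compute that the resulting Schwartzman subgroup is $\Z$. The only notable difference is that you compute $H^1(\Sigma_T\T,\Z)\cong\Z$ via the Wang exact sequence, whereas the paper computes $C^\sharp(X,\T)\cong\Z$ by a direct topological argument (degree counting on the fiber plus a collapse to $\bbS^2$); the paper also resolves the non-minimality issue you flag simply by noting that the lifted measure has full topological support, which is all Johnson's theorem requires.
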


\begin{remark}\label{rema:1.2}
Let us make a few comments about Theorem~\ref{t.dmspec}
\begin{enumerate}
\item[(a)] The theorem shows that $\Sigma_f$ is always an interval. As pointed out above, the argument given in  \cite{BourgainSchlag2000CMP} shows that this interval must contain $[-2+f(0),2+f(0)]$. The point is that $0$ is a fixed point of $T$ and hence the spectrum of $H_0$ is easy to compute: it is given precisely by $[-2+f(0),2+f(0)]$. More generally, one can see that for every periodic point of $T$ the associated periodic Schr\"odinger spectrum must be contained in $\Sigma_f$. Pushing this further, one can view $\Sigma_f$ as the closure of the union of all periodic spectra arising in this way. As there are infinitely many periodic spectra to deal with, and a point of minimal period $p$ generically gives a spectrum with $p-1$ gaps, the fact that $\Sigma_f$ has no gaps whatsoever is not obvious.
\smallskip

\item[(b)] The conclusion of the theorem may fail for discontinuous sampling functions $f$. Indeed, while there still is an almost sure essential spectrum $\Sigma_f$ for any measurable (and, say, bounded) $f : \T \to \R$, it may be disconnected. An explicit example is given by $f(\omega) = \lambda \chi_{[0,1/2)}(\omega)$ with $\lambda > 4$; in this case one has a half-line Bernoulli-Anderson model and a well-known argument shows that $\Sigma_f = [-2,2] \cup [-2 + \lambda,2 + \lambda]$. In the spirit of a more general conjecture of Bellissard, we suspect that for any bounded measurable $f$, $\Sigma_f$ has at most finitely many gaps.
\smallskip

\item[(c)] Bourgain and Schlag also considered Schr\"odinger operators in $\ell^2(\Z)$ with potentials generated by hyperbolic toral automorphisms in \cite{BourgainSchlag2000CMP}, again with a focus on the spectral type. The result analogous to Theorem~\ref{t.dmspec} (i.e., that the almost sure spectrum is always connected) holds for these operators as well. This was discussed in \cite{DFGap}.
\smallskip

\item[(d)] The proof of Theorem~\ref{t.dmspec} can be applied to any affine toral endomorphism of the form $\T^d \ni \omega \mapsto A\omega$ for which Lebesgue measure is ergodic, and hence the result of this note generalizes the discussion from the previous item to the \emph{non-invertible} case. We focus on the doubling map as the most interesting special case which nevertheless exhibits the relevant phenomena and challenges. Concretely, if $A \in \Z^{n \times n}$ has $\det(A) \neq 0$, then $A$ induces a measure-preserving endomorphism $T:\T^d \to \T^d$ (this is well known, compare the discussion in \cite[Section~1.1]{Walters1982:ErgTh}). If $A$ has no roots of unity as eigenvalues, then Lebesgue measure on $\T^d$ is $T$-ergodic \cite[Corollary~1.10.1]{Walters1982:ErgTh} and can then be lifted to a suitable ergodic measure on the natural extension \cite{Ruelle2004} (cf.~the discussion on pp.912--913 of \cite{Mihailescu2011}). \smallskip

\item[(e)] One crucial motivation for us to prove Theorem~\ref{t.dmspec} is provided by spectral pseudo-randomness. Heuristically speaking, a model is \emph{pseudo-random} if it has spectral properties akin to those of random models. In the context of discrete one-dimensional Schr\"odinger operators this would mean almost sure pure point spectrum with exponentially decaying eigenfunctions (a.k.a.\ spectral localization), perhaps along with a suitable dynamical localization statement, and the finiteness of the number of gaps of the almost sure essential spectrum. Note that localization properties were the focus of the Bourgain-Schlag paper \cite{BourgainSchlag2000CMP}, whereas the topological structure of the almost sure essential spectrum had not been discussed before (to the best of our knowledge). In order to study the transition into the pseudo-random realm, it is a good idea to consider some examples. For simplicity, let us consider the following four types of potentials:
\begin{align}
V_\omega^{\mathrm{(qp)}} & = 2 \lambda \cos (2 \pi (n \alpha + \omega)) \\
V_\omega^{\mathrm{(ss)}} & = 2 \lambda \cos (2 \pi (n^2 \alpha + \omega)) \\
V_\omega^{\mathrm{(gss)}} & = 2 \lambda \cos (2 \pi (n^k \alpha + \omega)) \\
V_\omega^{\mathrm{(dm)}} & = 2 \lambda \cos (2 \pi (2^n \omega)).
\end{align}
The associated Schr\"odinger operators will be denoted accordingly, that is, $H_\omega^{\mathrm{(qp)}}, H_\omega^{\mathrm{(ss)}}, H_\omega^{\mathrm{(gss)}}, H_\omega^{\mathrm{(dm)}}$. Here, the superscripts stand for \emph{quasi-periodic, skew-shift, generalized skew-shift}, and \emph{doubling map}, respectively. The number $\alpha$ is assumed to be irrational. The reader will recognize $H_\omega^{\mathrm{(qp)}}$ as the \emph{almost Mathieu operator} and it is chosen here as a representative quasi-periodic model for definiteness. Now, $H_\omega^{\mathrm{(qp)}}$ is not pseudo-random. Indeed, it has purely absolutely continuous spectrum for each $\lambda \in (-1,1)$, as shown by Avila \cite{A14a}, and it has a Cantor spectrum for every $\lambda \in \R \setminus \{ 0 \}$, a result due to Avila and Jitomirskaya \cite{AviJit2009Annals}. Spectral localization is conjectured to hold for each of $H_\omega^{\mathrm{(ss)}}, H_\omega^{\mathrm{(gss)}}H_\omega^{\mathrm{(dm)}}$, for any $\lambda \in \R \setminus \{ 0 \}$. Also, the (almost sure) spectrum of $H_\omega^{\mathrm{(ss)}}, H_\omega^{\mathrm{(gss)}}H_\omega^{\mathrm{(dm)}}$ is conjectured to be an interval. There are only partial results in support of these conjectures \cite{Bourgain2002, BourgainGoldsteinSchlag2001, HanLemmSchlag2020ETDS, HanLemmSchlag2020JST, Kruger2009, Kruger2012JSP, Kruger2012JFA, Kruger2013}. Clearly, $H_\omega^{\mathrm{(ss)}}$ is the least random of these models, which are conjectured to be pseudo-random. The main reason to also consider the generalized skew-shift model $H_\omega^{\mathrm{(gss)}}$ is to make the analysis a bit easier, though it is still expected to be very difficult. In light of this discussion, Theorem~\ref{t.dmspec} establishes one of the pseudo-randomness aspects for one of the key candidates in full generality.
\end{enumerate}
\end{remark}

The proof of Theorem~\ref{t.dmspec} is given in Section~\ref{sec:mainproof}. The overall strategy is to relate the doubling map to the standard solenoid, which then gives an invertible dynamical system to which the gap-labelling theorem can be applied. One then computes the Schwartzman homomorphism restricted to the homotopy classes of maps on the suspension of the solenoid that factor through the suspension of the doubling map, and then shows that any stable section of uniformly hyperbolic cocycles associated with the doubling map factor through in this manner. By showing that this group is precisely $\Z$, one sees that the only possible rotation numbers that one can observe in spectral gaps are zero and one, and hence there are no open interior gaps.

\section*{Acknowledgments}

We are grateful to Vaughn Climenhaga and Anton Gorodetski for helpful conversations. We also want to thank the American Institute of Mathematics for hospitality and support during a January 2022 visit, during which part of this work was completed.

\section{Absence of Gaps via Embedding and Schwartzman} \label{sec:mainproof}

In this section we associate a family of whole-line operators with the half-line family generated by the doubling map $T : \T \to \T$, along with the ergodic measure $\mu = \mathrm{Leb}$ and the sampling function $f \in C(\T,\R)$ we are interested in. The motivation for doing so is that we want to apply general theorems that are known for ergodic families of Schr\"odinger operators in $\ell^2(\Z)$. Specifically, we seek to invoke the theorem that establishes the existence of an almost sure spectrum for the whole line family and mimic the arguments that lead to a canonical set of gap labels for this set.

In principle we want to proceed as in \cite{DamKil2005b} when passing from the half-line model to the whole-line model. However, there is one aspect that will force us to proceed differently from  \cite{DamKil2005b}. Recall first that \cite{DamKil2005b} used the binary expansion of $\omega \in \T$ to semi-conjugate $T$ to a one-sided full shift over the alphabet $\{0,1\}$. The latter dynamical system has an obvious two-sided extension: the two-sided full shift over the alphabet $\{0,1\}$. With a simple adjustment of the ``forward-looking'' conjugated sampling function, we can easily extend it from $\{0,1\}^{\Z_+}$ to $\{0,1\}^\Z$. This setup is sufficient to identify the almost sure spectrum of the derived whole-line model with the almost sure essential spectrum of the half-line model, and hence we could view $\Sigma_f$ from this perspective. However, when working out the consequences of the gap labelling theorem, the total disconnectedness of $\{0,1\}^\Z$ presents a serious obstacle when trying to prove that $\Sigma_f$ is connected! Our solution will be to not pass to the symbolic setting, but rather use the standard solenoid to make the doubling map invertible. 

\subsection{The Associated Whole-Line Model}

Let us recall the construction of the standard (Smale--Williams) solenoid; compare \cite[Section~1.9]{BrinStuck2015Book} and \cite[Section~17.1]{KatokHassel1995Book}. Consider the solid torus
$$
\mathfrak{T} = \T \times D^2, \quad \text{where } D^2 = \{ (x,y) \in \R^2 : x^2 + y^2 \le 1 \}.
$$
Fix $\lambda \in (0,1/2)$ and define
$$
F : \mathfrak{T} \to \mathfrak{T}, \quad (\omega,x,y) \mapsto \left( 2 \omega, \lambda x + \frac12 \cos (2 \pi \omega), \lambda y + \frac12 \sin (2 \pi \omega) \right). 
$$
Then $F$ is one-to-one and
$$
S = \bigcap_{n = 0}^\infty F^n(\mathfrak{T})
$$
is a closed $F$-invariant subset of $\mathfrak{T}$ on which $F$ is a homeomorphism; $S$ is called the (standard) \emph{solenoid}.

We can now define the desired family of whole-line operators. We set
$\widetilde \Omega = S$, $\widetilde T = F|_S$, and choose $\widetilde \mu$ to be the natural ergodic extension to $\widetilde \Omega$ of Lebesgue measure on $\T$. The measure $\widetilde \mu$, besides being ergodic, has full topological support,
\begin{equation}\label{eq:tildemusupport}
\supp \widetilde \mu = \widetilde \Omega,
\end{equation}
and can be interpreted as the Bowen--Margulis measure, as well as the Sinai--Ruelle--Bowen measure. Moreover, it is locally the direct product of Lebesgue measure on $\T$ with the $(1/2,1/2)$-Bernoulli measure along the Cantor fibers; in particular, with $\pi_1(\omega,x,y) = \omega$, we have
\begin{equation}\label{e.bowenprojection}
(\pi_1)_*(\widetilde \mu) = \mu.
\end{equation}

While we were unfortunately not able to locate an explicit discussion of this measure in the literature due to the simplicity of the example of the standard solenoid within the discussion of  hyperbolic attractors, and the Gibbs measures thereupon, we refer the reader to \cite{ClimenhagaLuzzattoPesin2017, ClimenhagaPesinZelerowicz2019, PPGibbs, Ruelle2004} as well as \cite[Exercise 2.1.9]{EinsiedlerWard2011Book} and \cite[Appendix to Chapter 1]{Robert2000} for useful background information.

As the sampling function, we choose
\begin{equation}\label{e.tildefdef}
\widetilde f : \widetilde\Omega \to \R, \; (\omega,x,y) \mapsto f(\omega).
\end{equation}
This in turn yields potentials
\begin{equation}\label{e.tildepot}
\widetilde V_{(\omega,x,y)}(n) = \widetilde f(\widetilde T^n (\omega,x,y)), \quad n \in \Z
\end{equation}
and bounded self-adjoint operators
\begin{equation}\label{e.tildeoper}
[H_{(\omega,x,y)} \psi](n) = \psi(n+1) + \psi(n-1) + \widetilde V_{(\omega,x,y)}(n) \psi(n), \quad \psi \in \ell^2(\Z), \quad n \in \Z.
\end{equation}
The associated \emph{density of states measure} $dk$ is given by
\begin{equation} \label{eq:dsm}
\int g \, dk = \int \langle \delta_0, g(H_{(\omega,x,y)}) \delta_0 \rangle \, d\widetilde \mu(\omega,x,y),
\end{equation}
and its accumulation function
\begin{equation} \label{eq:ids}
k(E) = \int \chi_{(-\infty,E]} \, dk
\end{equation}
is called the \emph{integrated density of states}.

Since $\widetilde \mu$ is $\widetilde T$-ergodic, the general theory of ergodic Schr\"odinger operators in $\ell^2(\Z)$ (see, e.g., \cite{ESO1}) gives that there is a compact $\widetilde \Sigma_{\widetilde f} \subseteq \R$ such that
\begin{equation}\label{e.dmaswlspec}
\widetilde \Sigma_{\widetilde f} = \sigma(H_{(\omega,x,y)}) \quad \widetilde \mu-\text{a.s.}
\end{equation}
Moreover, this almost sure spectrum coincides with the topological support of the density of states measure, that is,
\begin{equation} \label{eq:sigmasuppdk}
\widetilde \Sigma_{\widetilde f} = \supp  dk.
\end{equation}

Our goal is to show that the set $\widetilde \Sigma_{\widetilde f}$ coincides with the set of interest, $\Sigma_f$. Before we can prove this, we need to establish the following:

\begin{lemma}\label{l.potentialscoincide}
For $(\omega,x,y) \in S$ and $n \in \Z_+$, we have
$$
\widetilde V_{(\omega,x,y)}(n) = V_\omega(n).
$$
\end{lemma}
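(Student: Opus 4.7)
The plan is to prove the lemma by a short semiconjugacy argument. The key observation is that the projection $\pi_1 : \mathfrak{T} \to \T$, $(\omega,x,y) \mapsto \omega$, intertwines $F$ and $T$. Indeed, directly from the definition of $F$, the first coordinate of $F(\omega,x,y)$ is $2\omega = T\omega$, so
\[
\pi_1 \circ F = T \circ \pi_1 \quad \text{on } \mathfrak{T}.
\]
Restricting to the invariant set $S$ (and recalling $\widetilde T = F|_S$) and iterating, I would conclude by induction on $n \ge 0$ that $\pi_1 \circ \widetilde T^n = T^n \circ \pi_1$ on $S$.

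Then, for any $(\omega,x,y) \in S$ and any $n \in \Z_+$, I would unfold the definitions:
\[
\widetilde V_{(\omega,x,y)}(n) = \widetilde f\bigl(\widetilde T^n(\omega,x,y)\bigr) = f\bigl(\pi_1 \widetilde T^n(\omega,x,y)\bigr) = f\bigl(T^n \pi_1(\omega,x,y)\bigr) = f(T^n \omega) = V_\omega(n),
\]
where the second equality uses the definition \eqref{e.tildefdef} of $\widetilde f$, the third uses the intertwining identity, and the last uses the definition \eqref{e.pot} of $V_\omega$.

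There is no real obstacle here — the statement is essentially a tautology once one notes the semiconjugacy $\pi_1 \circ F = T \circ \pi_1$, which is immediate from the formula for $F$. The restriction to $n \in \Z_+$ is also natural: the identity $\pi_1 \circ \widetilde T^n = T^n \circ \pi_1$ requires only that we iterate $F$ forward in time, matching the fact that $T$ is only a (non-invertible) endomorphism while $\widetilde T$ is a homeomorphism. This lemma is the bridge that lets one identify the half-line potentials $V_\omega$ with restrictions of the whole-line potentials $\widetilde V_{(\omega,x,y)}$ to $\Z_+$, which will be used in the next step to relate $\Sigma_f$ to $\widetilde \Sigma_{\widetilde f}$.
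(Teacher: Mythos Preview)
Your proof is correct and is essentially the same as the paper's: both unwind the definitions and use the observation that the first coordinate of $F^n(\omega,x,y)$ is $T^n\omega$ for $n \in \Z_+$, which you phrase as the semiconjugacy $\pi_1 \circ F = T \circ \pi_1$. The extra commentary on the role of $n \ge 0$ is accurate but not needed for the argument.
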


\begin{proof}
This follows quickly from the definitions:
$$
\widetilde V_{(\omega,x,y)}(n) = \widetilde f(\widetilde T^n (\omega,x,y)) = \widetilde f(F^n (\omega,x,y)) = f(T^n \omega) = V_\omega(n).
$$
Here we used in the third step that for $n \in \Z_+$, the first component of $F^n (\omega,x,y)$ is simply $T^n \omega$, so that \eqref{e.tildefdef} yields the asserted identity.
\end{proof}

\begin{prop}\label{p.spectracoincide}
We have $\widetilde \Sigma_{\widetilde f} = \Sigma_f$.
\end{prop}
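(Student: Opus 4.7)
The plan is to prove both inclusions of $\widetilde\Sigma_{\widetilde f} = \Sigma_f$, combining Lemma \ref{l.potentialscoincide}, the a.s.\ spectrum identification \eqref{e.dmaswlspec}, the projection formula \eqref{e.bowenprojection}, and the full support of $\widetilde\mu$ from \eqref{eq:tildemusupport}.

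For the inclusion $\Sigma_f \subseteq \widetilde\Sigma_{\widetilde f}$, I would invoke the standard rank-two decoupling: splitting $H_{(\omega,x,y)}$ into its restrictions $H_{(\omega,x,y)}^+$ to $\ell^2(\Z_+)$ and $H_{(\omega,x,y)}^-$ to $\ell^2(\Z_{<0})$ is a finite-rank perturbation, so $\sigma_\ess(H_{(\omega,x,y)}) = \sigma_\ess(H_{(\omega,x,y)}^+) \cup \sigma_\ess(H_{(\omega,x,y)}^-) \subseteq \sigma(H_{(\omega,x,y)})$. By Lemma \ref{l.potentialscoincide}, $H_{(\omega,x,y)}^+$ coincides with $H_\omega$. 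Combining this with \eqref{e.dmaswlspec} and the projection \eqref{e.bowenprojection} then yields $\Sigma_f \subseteq \widetilde\Sigma_{\widetilde f}$.

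For the reverse inclusion, which is the heart of the proof, I would transport approximate eigenfunctions from the whole line to the positive half-line by exploiting the ergodic density of orbits. Fix $E \in \widetilde\Sigma_{\widetilde f}$ and, using \eqref{e.dmaswlspec} together with the standard fact that invertible ergodic Schr\"odinger operators have almost surely empty discrete spectrum, choose a $\widetilde\mu$-typical $(\omega_0, x_0, y_0)$ with $E \in \sigma_\ess(H_{(\omega_0, x_0, y_0)})$. The Weyl criterion supplies, for each $\varepsilon > 0$ and $M \in \bbN$, a unit vector $\psi \in \ell^2(\Z)$ with $\supp \psi \subseteq [-M,M]$ and $\|(H_{(\omega_0, x_0, y_0)} - E)\psi\| < \varepsilon$. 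Since $\widetilde T$ is ergodic and $\widetilde\mu$ has full support, for $\widetilde\mu$-a.e.\ $(\omega,x,y)$ the forward orbit $\{\widetilde T^n(\omega,x,y)\}_{n \geq 0}$ is dense in $\widetilde\Omega$, so we may pick $n_k \to \infty$ with $\widetilde T^{n_k}(\omega,x,y) \to (\omega_0,x_0,y_0)$. Continuity of each $\widetilde T^m$ on the compact space $\widetilde\Omega$ and of $\widetilde f$ gives $\widetilde V_{(\omega,x,y)}(n_k + m) \to \widetilde V_{(\omega_0,x_0,y_0)}(m)$ uniformly for $m \in [-M,M]$. Hence the shifted vectors $\tilde\psi_k(j) := \psi(j - n_k)$ satisfy $\|(H_{(\omega,x,y)} - E)\tilde\psi_k\| < \varepsilon + o(1)$. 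Once $n_k > M$, one has $\supp \tilde\psi_k \subseteq \Z_+$, and Lemma \ref{l.potentialscoincide} gives $\|(H_\omega - E)\tilde\psi_k\|_{\ell^2(\Z_+)} = \|(H_{(\omega,x,y)} - E)\tilde\psi_k\|$. Letting $\varepsilon \to 0$ and $M \to \infty$ produces a Weyl sequence for $H_\omega$ at $E$, and the projection \eqref{e.bowenprojection} transfers this to $\mu$-a.e.\ $\omega$, giving $E \in \Sigma_f$.

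The main technical step is the transport argument in the preceding paragraph; it is here that the invertibility of $\widetilde T$ (as opposed to the non-invertible $T$ on $\T$) plays an essential role, since gluing the local potential structure around $(\omega_0, x_0, y_0)$ into the positive half-line of an arbitrary typical trajectory requires the full shift-equivariance of the whole-line operator, which only becomes available after passing to the solenoid extension. The remaining ingredients---continuity of finitely many iterates, uniform convergence on a finite window, and tracking of supports---are routine and present no substantive obstacle.
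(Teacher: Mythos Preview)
Your argument is correct, but it takes a different route from the paper's. The paper disposes of both inclusions at once by invoking a single black-box fact from the general theory of ergodic whole-line Schr\"odinger operators (citing \cite[Sections~2.2 and~4.2]{ESO1}): for $\widetilde\mu$-a.e.\ $(\omega,x,y)$, the spectrum of $H_{(\omega,x,y)}$ is purely essential and coincides with the essential spectrum of its restriction to $\Z_+$. Since that restriction is exactly $H_\omega$ by Lemma~\ref{l.potentialscoincide}, and the resulting set depends only on $\omega$, one pushes forward via \eqref{e.bowenprojection} and is done. Your proof, by contrast, unpacks that black box: the rank-two decoupling gives $\Sigma_f \subseteq \widetilde\Sigma_{\widetilde f}$, while your Weyl-sequence transport (dense forward orbits plus continuity of finitely many iterates) gives the reverse inclusion by hand. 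What you gain is a self-contained argument that does not defer to the literature; what the paper gains is brevity.

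One small correction: your quantifiers in the Weyl step are misstated. You cannot fix both $\varepsilon>0$ and $M\in\bbN$ in advance and then find a unit $\psi$ with $\supp\psi\subseteq[-M,M]$ and $\|(H_{(\omega_0,x_0,y_0)}-E)\psi\|<\varepsilon$; the space of vectors supported in $[-M,M]$ is finite-dimensional. What you mean (and what your concluding ``letting $\varepsilon\to 0$ and $M\to\infty$'' makes clear) is: for each $\varepsilon>0$ there exist $M$ and $\psi$ with these properties. With that fix the transport goes through exactly as you describe, and the supports drifting to $+\infty$ guarantee the resulting sequence is a genuine singular Weyl sequence, so $E\in\sigma_\ess(H_\omega)$ rather than merely $\sigma(H_\omega)$.
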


\begin{proof}
It follows from the general theory of ergodic Schr\"odinger operators in $\ell^2(\Z)$  that for $\widetilde \mu$-almost every $(\omega,x,y) \in S$, the spectrum of $H_{(\omega,x,y)}$ is purely essential, equals $\widetilde \Sigma_{\widetilde f}$, and coincides with the essential spectrum of the restriction of $H_{(\omega,x,y)}$ to $\Z_+$ (this follows for instance from the discussion in \cite[Sections~2.2 and~4.2]{ESO1}). The latter set only depends on $\omega$ and hence, by almost sure independence, Lemma~\ref{l.potentialscoincide}, and \eqref{e.bowenprojection}, will coincide with $\Sigma_f$.
\end{proof}

By  \eqref{eq:tildemusupport} and \cite[Theorem 3.1]{Johnson1986JDE} (see also \cite[Theorem~3.8.2 and Corollary~4.9.4]{ESO1}) we have
\begin{equation} \label{eq:JohnsonForSolenoid}
\widetilde \Sigma_{\widetilde f} = \R \setminus \ED,
\end{equation}
where 
$$
\ED = \{ E \in \R : (\widetilde T, A_{E-\widetilde f}) \text{ enjoys an exponential dichotomy} \}.
$$
Here,
$$
A_{E-\widetilde f} : \widetilde \Omega \to \mathrm{SL}(2,\R), \quad (\omega,x,y) \mapsto \begin{bmatrix} E - \widetilde f(\omega,x,y) & - 1 \\ 1 & 0 \end{bmatrix}
$$
and $(\widetilde T, A_{E-\widetilde f})$ is the associated $\mathrm{SL}(2,\R)$-cocycle over the base dynamics given by $\widetilde T$,
$$
(\widetilde T, A_{E-\widetilde f}) : \widetilde \Omega \times \R^2 \to \widetilde \Omega \times \R^2, \quad (\omega,x,y,v) \mapsto A_{E-\widetilde f} (\omega,x,y) v.
$$
For $n \in \Z$, the maps $A_{E-\widetilde f}^n : \widetilde \Omega \to \mathrm{SL}(2,\R)$ are defined by $(\widetilde T, A_{E-\widetilde f})^n = (\widetilde T^n, A^n_{E-\widetilde f})$.

For $E \in \R \setminus \widetilde\Sigma_{\widetilde f} = \ED$, the definition of exponential dichotomy asserts that there exist  continuous maps $\widetilde\Lambda_E^\pm:\widetilde\Omega \to \R\PP^1$ and constants $C,c>0$ such that
\begin{align} \label{eq:expdich1}
A_{E-\widetilde f}(\widetilde\omega) \widetilde \Lambda_E^\pm(\widetilde\omega) 
& = \widetilde \Lambda_E^\pm(\widetilde T\widetilde \omega), \quad  \widetilde\omega \in \widetilde\Omega, \\
\label{eq:expdich2}
\|A_{E - \widetilde f}^{\pm n}(\widetilde\omega) v\| 
& \leq Ce^{-cn}, \quad v \in \widetilde\Lambda_E^{\pm}(\widetilde\omega), \ n \in \N.
\end{align}
We call $\widetilde\Lambda_E^+$ (respectively, $\widetilde\Lambda_E^-$) the \emph{stable section} (respectively, the \emph{unstable section}) of $(\widetilde T,A_{E-\widetilde f})$ . For the topological arguments in the present work, let us point out that we only use the invariance property, not the exponential decay statements for semiorbits.

Of course we have a similar family $A_{E-f}^n : \Omega \to \mathrm{SL}(2,\R)$ defined by
$$
A_{E-f} : \Omega \to \mathrm{SL}(2,\R), \quad \omega \mapsto \begin{bmatrix} E - f(\omega) & - 1 \\ 1 & 0 \end{bmatrix}
$$
and $(T, A_{E-f})^n = (T^n, A^n_{E-f})$ for $n \in \Z_+$.
\begin{lemma}\label{l.cocyclescoincide}
For $(\omega,x,y) \in S$ and $n \in \Z_+$, we have
$$
A^n_{E-\widetilde f}(\omega,x,y) = A_{E-f}^n(\omega).
$$
\end{lemma}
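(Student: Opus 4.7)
The plan is to reduce the equality of the $n$-step cocycles to the already-established pointwise agreement of the sampling functions along forward orbits (Lemma~\ref{l.potentialscoincide}), and then conclude by invoking the cocycle identity. First I would record the essentially tautological observation that the Schr\"odinger cocycle matrix $A_{E-g}(\widetilde\omega)$ at a single point depends on $g$ only through the scalar $g(\widetilde\omega)$. For $k \in \Z_+$, the first component of $F^k(\omega,x,y) = \widetilde T^k(\omega,x,y)$ is $T^k\omega$, as was already used in the proof of Lemma~\ref{l.potentialscoincide}; the definition \eqref{e.tildefdef} of $\widetilde f$ then gives $\widetilde f(\widetilde T^k(\omega,x,y)) = f(T^k\omega)$. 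Substituting into the formulas that define $A_{E-\widetilde f}$ and $A_{E-f}$ yields the pointwise identity
$$
A_{E-\widetilde f}(\widetilde T^k(\omega,x,y)) = A_{E-f}(T^k\omega), \quad k \in \Z_+.
$$

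With this in hand, I would finish by applying the cocycle identity
$$
A^n_{E-\widetilde f}(\widetilde\omega) = A_{E-\widetilde f}(\widetilde T^{n-1}\widetilde\omega) \cdots A_{E-\widetilde f}(\widetilde T\widetilde\omega) A_{E-\widetilde f}(\widetilde\omega)
$$
at $\widetilde\omega = (\omega,x,y)$, together with its analogue for $A^n_{E-f}$ over $T$, and multiplying the matching single-step factors. There is no genuine obstacle in this proof: the entire content is that for \emph{forward} time $n \geq 0$ the solenoid dynamics $\widetilde T$ projects onto the doubling map $T$ via $\pi_1$, which is built into the construction of $F$. The restriction $n \in \Z_+$ is essential, since for negative $n$ the first coordinate of $F^{n}(\omega,x,y)$ need not equal $T^{n}\omega$ (indeed $T$ is not invertible). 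The lemma is recorded here precisely so that, downstream, one can transfer invariant objects such as the stable sections $\widetilde\Lambda_E^+$ between the two cocycles.
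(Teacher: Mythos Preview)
Your proposal is correct and is precisely the intended argument: the paper's own proof is the one-line remark that this is a consequence of Lemma~\ref{l.potentialscoincide}, and you have simply spelled out that consequence via the cocycle identity.
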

\begin{proof} This is a consequence of Lemma~\ref{l.potentialscoincide}.\end{proof}

\subsection{Suspensions of the Doubling map and the Standard Solenoid}

Let us briefly recall some terminology and definitions that will be helpful. For proofs and further discussion, we point the reader to the survey \cite{DFGap}. We consider a compact metric space $X$ with a continuous flow $\tau$ and a $\tau$-ergodic measure, $\nu$. Let $C^\sharp(X,\T)$ denote the set of homotopy classes of continuous maps $X \to \T$. Given $\phi \in C(X,\T)$, $x \in X$, one can lift the map $\phi_x:t \mapsto \phi(\tau^t x)$ to $\psi_x : \R \to \R$. The limit
\[\rot(\phi;x) = \lim_{t \to \infty} \frac{\psi_x(t)}{t}\]
exists for $\nu$-a.e.\ $x$, it is almost-surely independent of $x$, and its almost-sure value depends only on the homotopy class of $\phi$ \cite{Schwarzmann1957Annals}. The induced map $\mathfrak{A}_\nu:C^\sharp(X,\T) \to \R$ given by
\[ \mathfrak{A}_\nu([\phi]) = \rot(\phi;x) \quad \nu\text{-a.e.\ } x \in X, \]
is called the \emph{Schwartzman homomorphism}. When working with linear cocycles over a dynamical system, is often convenient to work with maps into the projective line $\R\PP^1$ instead of $\T$. For such maps, one can define $\mathfrak{A}_\nu$ by identifying $\R\PP^1$ with $\T$ via the map $\T \ni \theta \mapsto \mathrm{span}\{(\cos\pi\theta,\sin\pi\theta)^\top\} \in \R\PP^1$. Using this identification, if $\Lambda \in C(X,\R\PP^1)$, one has
\begin{equation}
\mathfrak{A}_\nu([\Lambda]) 
= \lim_{T\to \infty} \frac{1}{\pi T} \Delta_{\rm arg}^{[0,T]} \Lambda(\tau^t x), \quad \nu\text{-a.e.\ } x \in X,
\end{equation}
where $\Delta_{\rm arg}^I$ denotes the net change in the argument on the interval $I$.

The \emph{gap-labelling theorem} for ergodic Schr\"odinger operators (see, e.g.,  \cite{ESO1, DFGap, Johnson1986JDE}) asserts the following: if $\{H_\omega\}_{\omega \in \Omega}$ is an ergodic family of Schr\"odinger operators in $\ell^2(\Z)$ generated by an invertible topological dynamical system $(\Omega,T)$ with ergodic measure $\mu$, and $k$ denotes the associated integrated density of states, then for each $E \in \R \setminus \Sigma$, $k(E)$ lies in the range of the Schwartzman homomorphism associated to $(X,\tau,\nu)$, the suspension of $(\Omega,T,\mu)$.  More precisely, recall that $E \in \mathcal{ED}$ on account of  \eqref{eq:JohnsonForSolenoid}, and hence enjoys (un)stable sections $\Lambda_E^\pm$ as in \eqref{eq:expdich1}--\eqref{eq:expdich2}. The gap-labelling theorem asserts that
\begin{equation}\label{idsrn} 
k(E) = 1 - \mathfrak{A}_\nu \Big(  \big[ \overline\Lambda^+_E\big]\Big),
\end{equation}
where $\overline\Lambda^+_E$ is a suitable suspension of the stable section associated with the cocycle $(T,A_{E-f})$, which can be generated by using a suitable homotopy to the identity; see \cite{DFGap, Johnson1986JDE} for details. For an earlier version of the gap-labelling theorem via $K$-theory that applies to operators in arbitrary dimension, see Bellissard \cite{Bel1986, Bel1992b}.

Consider $\Omega = \T$, $T\omega = 2\omega$, and $X = X(\Omega,T)$ the suspension
\[ \T \times [0,1] / ((\omega,1) \sim (2\omega,0)). \]
Let us emphasize that non-invertibility of $(\Omega,T)$ implies that one cannot directly apply the gap-labelling theorem in this setting. Nevertheless, the following calculation of homotopy classes will be useful.

\begin{theorem} \label{t:dmSuspCsharp}
With notation as above, $C^\sharp(X,\T) \cong \Z$, generated by the trivial map $[x,t] \mapsto t$.
\end{theorem}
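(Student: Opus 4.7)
The plan is to identify $C^\sharp(X, \T) = [X, \T]$ with $H^1(X; \Z)$ — valid because $\T$ is a $K(\Z,1)$ and the mapping torus $X$ carries an obvious CW structure — and then to compute $H^1(X; \Z)$ via Mayer--Vietoris for a natural open cover of the suspension.

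First I would cover $X$ by $U = \{[\omega, t] \in X : 0 < t < 1\}$, which deformation retracts onto $\T \times \{1/2\}$, and an open neighborhood $V$ of the gluing circle $\{t = 0\} \sim \{t = 1\}$, which also deformation retracts onto a single circle. The intersection $U \cap V$ deformation retracts onto two disjoint copies of $\T$, one near $t = 0$ and one near $t = 1$. The inclusion $U \cap V \hookrightarrow U$ is, on each component, a homotopy equivalence and so on $H^1 \cong \Z$ it is the diagonal $n \mapsto (n, n)$. The crucial computation is the inclusion $U \cap V \hookrightarrow V$: the component near $t = 0$ embeds into the gluing circle as the identity, whereas the component near $t = 1$ is identified with the gluing circle through $(\omega, 1) \sim (2\omega, 0)$, i.e., by the doubling map, of degree $2$. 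Thus the Mayer--Vietoris map $H^1(U) \oplus H^1(V) \to H^1(U \cap V)$ is $(a, b) \mapsto (a - b,\, a - 2b)$, a $\Z$-linear map of $\Z^2$ with determinant $\pm 1$, and hence an isomorphism. The long exact sequence then forces $H^1(X; \Z)$ to be the cokernel of the $H^0$-level Mayer--Vietoris map $\Z^2 \to \Z^2$, $(a, b) \mapsto (a - b, a - b)$, which is $\Z$.

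To identify the generator I would verify that $p : X \to \T$, $p([\omega, t]) = t$ (with $\T = [0, 1]/(0 \sim 1)$), is well defined since the gluing matches $t = 1$ to $t = 0$, and that its restriction to the loop $t \mapsto [0, t]$ — a genuine loop because $0$ is a fixed point of $T$ — is the identity map $\T \to \T$, of degree one. The pullback of a generator of $H^1(\T)$ through this loop is therefore a generator of $H^1(S^1)$, which forces $[p]$ to be a primitive and hence generating class in $H^1(X; \Z) \cong \Z$. The only substantive step in the plan is bookkeeping the factor of $2$ introduced by the doubling identification in the Mayer--Vietoris calculation; once that is in place the rest is routine. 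As a sanity check one may instead invoke the Wang sequence $H^{n-1}(\T) \xrightarrow{T^* - 1} H^{n-1}(\T) \to H^n(X) \to H^n(\T) \xrightarrow{T^* - 1} H^n(\T)$ for the mapping torus and observe that $T^* - 1$ is zero on $H^0$ and an isomorphism on $H^1$, so that $H^1(X) \cong \Z$ arises via the connecting map from a generator of $H^0(\T)$ — yielding the same generator $[p]$.
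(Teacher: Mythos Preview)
Your proof is correct but follows a genuinely different route from the paper's. The paper argues directly with homotopies: the restrictions $\phi_s(\omega)=\phi([\omega,s])$ are mutually homotopic circle maps of some common degree $k$, and the gluing relation $\phi_1(\omega)=\phi_0(2\omega)$ forces $k=2k$, hence $k=0$; after subtracting the contribution $s\mapsto ns$ coming from the fixed-point loop $\{[0,s]\}$, the remaining map factors through $X/Y\cong\bbS^2$ (where $Y$ is the union of that loop and the slice $\{[\omega,0]\}$) and is therefore nullhomotopic. You instead invoke $[X,\T]\cong H^1(X;\Z)$ and compute the latter by Mayer--Vietoris, with the Wang sequence as a cross-check. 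The paper's approach is more elementary and entirely self-contained; yours is more systematic and, particularly in its Wang-sequence form, generalizes transparently to the toral endomorphisms $\omega\mapsto A\omega$ discussed in the paper's subsequent remark, since one needs only that $A^*-I$ is invertible on $H^1(\T^d;\Z)\cong\Z^d$. One small point worth making explicit in your write-up: $X$ is not a manifold (it is branched along the gluing circle, where three half-disks meet), but it is a CW complex because the doubling map is cellular for the standard CW structure on $\T$, so the representability of $H^1$ by $K(\Z,1)$ applies.
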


\begin{proof}
Let $\phi \in C(X,\T)$ be given. For each $s \in [0,1]$, consider the map $\phi_s:\T \to \T$ given by $\omega \mapsto \phi([\omega,s])$. By well-known facts from topology, $\phi_s$ is homotopic to $\omega \mapsto k\omega$ for some $k \in \Z$. Since $\phi_s$ is homotopic to $\phi_{s'}$ for each $s,s'$, the value of $k$ is independent of $s$. 
On the other hand, one has
\[\phi_1(\omega) = \phi([\omega,1])= \phi([2\omega,0]) = \phi_0(2\omega).\]
Thus, $k=2k$, forcing $k = 0$, so $\phi_s$ is nullhomotopic for every $s$. Since $[0,1] = [2 \cdot 0,0] = [0,0]$, the set $\{[0,s] : s \in [0,1]\}$ is homeomorphic to a circle and hence there is $n \in \Z$ such that the restriction of $\phi$ to that circle is homotopic to $s \mapsto ns$. Consider $\phi_0[\omega,s] = \phi[\omega,s]-ns$. By work above, $\phi_0$ is homotopic to a map $\eta_0$ that vanishes on the set
\[ Y = \set{[0,s] \in X : s \in [0,1]}\cup \set{[\omega,0] \in X : \omega \in \T}.\]
Collapsing $Y$ to a point, we see that $\eta_0$ factors through a map $X/Y \to \T$. Since $X/Y$ is equivalent to the space obtained from the unit square $[0,1] \times [0,1]$ by identifying all points on the boundary, $X/Y \cong \X^2$, the two-sphere, and hence it follows that $\eta_0$ is nullhomotopic. Thus, $\phi$ is homotopic to $[x,t] \mapsto nt$, as desired.
\end{proof}

\begin{remark} \label{rem:generalTdendo}
Clearly, the proof of Theorem~\ref{t:dmSuspCsharp} applies to any expanding map $\omega \mapsto m \omega$ with $m \in \{2,3,\ldots\}$. Furthermore, the conclusion also holds for the suspension of any toral endomorphism of the form 
\[\T^d \ni \omega \mapsto A\omega,\]
where $A$ is a $d \times d$ integer matrix for which $\ker(A^*-I)$ is trivial; in particular, this applies to ergodic toral endomorphisms, which have no roots of unity as eigenvalues. This generalizes this discussion in \cite[Section~8]{DFGap} to the non-invertible case.
\end{remark}

Let $(\widetilde\Omega,\widetilde T,\widetilde\mu)$ denote the standard solenoid, let 
\begin{equation}
\widetilde X = X(\widetilde\Omega,\widetilde T)  = \widetilde \Omega \times [0,1] / ((\widetilde\omega,1) \sim (\widetilde T \widetilde\omega,0))
\end{equation}
 be its suspension, and let $\widetilde \nu$ denote the suspension of $\widetilde\mu$. The topologies of $\widetilde\Omega$ and $\widetilde X$ are somewhat more complicated than those of $\Omega = \T$ and $X = X(\Omega,T)$. However, in the case in which a map $\widetilde{X} \to \T$ factors through $X$, one can use the previous result to study the Schwartzman homomorphism. More precisely, notice that
\[ p:\widetilde X \to X, \quad [(\omega,x,y),s] \mapsto [\omega,s] \]
is continuous on account of the calculation
\[p[(\omega,x,y),1] = [\omega,1] = [2\omega,0] = p[\widetilde T(\omega,x,y),0]\]
We say that $\widetilde\phi \in C(\widetilde{X},\T)$ \emph{factors through} $X$ if there is $\phi \in C(X,\T)$ such that $\widetilde\phi = \phi\circ p$, that is,
\begin{equation}
\widetilde\phi([(\omega,x,y),s]) = \phi([\omega,s])
\end{equation}
for all $ (\omega,x,y) \in \widetilde\Omega$ and $s \in [0,1]$.

\begin{theorem} \label{t:solenoidThroughTCsharp}
 If $\widetilde\phi \in C(\widetilde X,\T)$ factors through $X$, then $\mathfrak{A}_{\widetilde \nu}([\widetilde\phi]) \in \Z$.
\end{theorem}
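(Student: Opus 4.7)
The plan is to reduce the claim to Theorem~\ref{t:dmSuspCsharp} via functoriality of homotopy classes and of the Schwartzman homomorphism. The map $p:\widetilde X \to X$ is continuous, so pullback $p^*:C(X,\T) \to C(\widetilde X,\T)$ descends to a well-defined group homomorphism $p^*:C^\sharp(X,\T) \to C^\sharp(\widetilde X,\T)$, $[\phi]\mapsto [\phi\circ p]$, because any homotopy $h:X\times[0,1]\to \T$ between $\phi$ and $\phi'$ lifts to the homotopy $h\circ(p\times \mathrm{id})$ between $\phi\circ p$ and $\phi'\circ p$. The hypothesis that $\widetilde\phi$ factors through $X$ means exactly that $[\widetilde\phi]=p^*[\phi]$ for some $\phi\in C(X,\T)$.

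Next I would invoke Theorem~\ref{t:dmSuspCsharp} to write $[\phi]=n[\phi_1]$ in $C^\sharp(X,\T)\cong \Z$, where $\phi_1([x,t])=t$ and $n\in\Z$. Since $p^*$ is a group homomorphism and $\mathfrak{A}_{\widetilde\nu}$ is a group homomorphism $C^\sharp(\widetilde X,\T)\to \R$, we obtain
\[
\mathfrak{A}_{\widetilde\nu}([\widetilde\phi])
= \mathfrak{A}_{\widetilde\nu}(p^*[\phi])
= n\cdot \mathfrak{A}_{\widetilde\nu}(p^*[\phi_1]).
\]
Thus the claim reduces to showing that the distinguished class $[\phi_1\circ p]$ has integer Schwartzman value; in fact, I would show it equals $1$.

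For this last point I would compute the rotation number directly from the definition. The suspension flow $\widetilde\tau^t$ on $\widetilde X$ acts by $\widetilde\tau^t[(\omega,x,y),s]=[(\omega,x,y),s+t]$, where the second coordinate is reduced modulo the identification $((\widetilde\omega,1)\sim(\widetilde T\widetilde\omega,0))$. Consequently, for any basepoint $\widetilde x=[(\omega,x,y),s]\in \widetilde X$,
\[
(\phi_1\circ p)(\widetilde\tau^t \widetilde x)
= s+t \pmod 1,
\]
which admits the obvious continuous lift $\psi_{\widetilde x}(t)=s+t$ to $\R$. Hence $\rot(\phi_1\circ p;\widetilde x)=\lim_{t\to\infty}(s+t)/t=1$ for every $\widetilde x$, so $\mathfrak{A}_{\widetilde\nu}(p^*[\phi_1])=1$ and $\mathfrak{A}_{\widetilde\nu}([\widetilde\phi])=n\in \Z$, completing the argument.

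There is no genuine obstacle in this proof; the only points that require a moment's care are the verification that $p^*$ is well defined on homotopy classes (immediate from continuity of $p$) and that homotopy invariance of the Schwartzman homomorphism, which is asserted in the introduction to Section~\ref{sec:mainproof}, really applies on $\widetilde X$ with the measure $\widetilde\nu$ (it does, since $\widetilde\nu$ is $\widetilde\tau$-ergodic as the suspension of the ergodic measure $\widetilde\mu$). All of the substantive topological content has already been absorbed into Theorem~\ref{t:dmSuspCsharp}.
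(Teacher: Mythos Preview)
Your argument is correct and follows essentially the same route as the paper: factor $\widetilde\phi=\phi\circ p$, apply Theorem~\ref{t:dmSuspCsharp} to reduce $\phi$ to the class of $[\omega,s]\mapsto ns$, pull the homotopy back along $p$, and compute the Schwartzman value of the resulting trivial map directly. The only cosmetic difference is that you package the pullback step as functoriality of $p^*$ on homotopy classes, whereas the paper writes out the homotopy $\widetilde F = F\circ(p\times\mathrm{id})$ explicitly.
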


\begin{proof}
Write $\widetilde\phi = \phi \circ p$ for some $\phi \in C(X,\T)$. By Theorem~\ref{t:dmSuspCsharp}, $\phi$ is homotopic to $\chi_n:[\omega,s]\mapsto ns$ for some $n \in \Z$. Writing $F:X \times I \to \T$ for a homotopy from $\phi$ to $\chi_n$, note that
\[\widetilde{F}:([\omega,x,y,s],t) \mapsto F([\omega,s],t)\]
gives a homotopy from $\widetilde \phi$ to the map $\widetilde\chi_n:[\omega,x,y,s]\mapsto ns$, and the result follows by noting that
\[\mathfrak{A}_{\widetilde\nu}([\widetilde\chi_n]) = n\]
by a direct calculation.
\end{proof}

\subsection{Proof of Main Theorem}

We now put everything together to prove the main result.

\begin{proof}[Proof of Theorem~\ref{t.dmspec}]
Let $f \in C(\T,\R)$ be given and write $\Sigma = \Sigma_f = \widetilde{\Sigma}_{\widetilde f}$ (cf.\ Proposition~\ref{p.spectracoincide}).  Given $E \in \R \setminus \Sigma$, the cocycle $(\widetilde T, A_{E-\widetilde f})$ is uniormly hyperbolic by \eqref{eq:JohnsonForSolenoid}, and thus there exist $\widetilde\Lambda^\pm$ as in \eqref{eq:expdich1} and \eqref{eq:expdich2}.

Now, define $A^t_{E - \widetilde f}$ for arbitary $t \in \R$ by using a suitable smooth homotopy to the identity as in \cite{DFGap, Johnson1986JDE}. More precisely,  let $\theta$ and $\lambda$ be smooth nondecreasing functions so that $\theta\equiv 0$ in a neighborhood of $0$, $\theta \equiv \pi/2$ in a neighborhood of $1/2$, $\lambda \equiv 0$ in a neighborhood of $1/2$ and $\lambda \equiv 1$ in a neighborhod of $1$, and then define
\begin{equation} \label{def:homotopy:identity}
Y_{E-\widetilde f}(\widetilde\omega,t)
=
\begin{cases}
\begin{bmatrix}
\cos(\theta(t)) & -\sin(\theta(t)) \\
\sin(\theta(t)) & \cos(\theta(t))
\end{bmatrix}
&
0 \leq t \leq 1/2
\\[5.5mm]
\begin{bmatrix}
\lambda(t)(E - \widetilde f(\widetilde \omega)) & -1 \\
1 & 0
\end{bmatrix}
&
1/2 \leq t \leq 1
\end{cases}
\end{equation}

With this, we define $A_{E- \widetilde f}^t(\widetilde \omega)$ by using $Y_{E- \widetilde f}$ to interpolate between $A_{E- \widetilde f}^n$ and $A_{E- \widetilde  f}^{n+1}$. More precisely, put
\begin{equation}
A_{E- \widetilde f}^t(\widetilde \omega)
=
Y_{E- \widetilde f}\left(\widetilde T^n \widetilde \omega,t-n \right) A_{E- \widetilde f}^{n}(\widetilde \omega),
\quad
\widetilde \omega \in \widetilde \Omega, \; n \le t < n+1,
\end{equation}
where $n \in \Z$. One can check that $A_{E - \widetilde f}^t(\widetilde \omega)$ is a smooth function of $t$ for all fixed $\widetilde \omega \in \widetilde \Omega$ and $E \in \R$ that agrees with $A_{E- \widetilde f}^n$ when restricted to $\Z$. 

Denote the suspension of the solenoid by $\widetilde{X} = X(\widetilde\Omega,\widetilde T)$, and use $A_{E-\widetilde f}^t$ to produce a continuous section $\overline\Lambda^+:\widetilde X \to \R\PP^1$ by
\begin{equation} \label{eq:Lambda+interpolatedDef}
\overline\Lambda^+([\widetilde\omega,s]) = A^s(\widetilde \omega) \widetilde\Lambda^+(\widetilde\omega), \quad \widetilde\omega \in \widetilde\Omega, \ s \in [0,1].
\end{equation}

\begin{claim}
The map $\overline{\Lambda}^+$ from  \eqref{eq:Lambda+interpolatedDef} is well-defined and continuous.
\end{claim}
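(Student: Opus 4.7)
The plan is to verify the claim in two parts: first, well-definedness, which reduces to checking a single compatibility condition at the gluing $(\widetilde\omega,1)\sim(\widetilde T\widetilde\omega,0)$; and second, continuity, which follows from the universal property of the quotient topology once one knows the formula is continuous on the product $\widetilde\Omega\times[0,1]$.

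For well-definedness, I would work with representatives $(\widetilde\omega,s)\in\widetilde\Omega\times[0,1]$ and observe that the only nontrivial identification is $(\widetilde\omega,1)\sim(\widetilde T\widetilde\omega,0)$. Thus one must verify
\begin{equation*}
A^1_{E-\widetilde f}(\widetilde\omega)\,\widetilde\Lambda^+(\widetilde\omega) \;=\; A^0_{E-\widetilde f}(\widetilde T\widetilde\omega)\,\widetilde\Lambda^+(\widetilde T\widetilde\omega).
\end{equation*}
From the construction of $Y_{E-\widetilde f}$ in \eqref{def:homotopy:identity} and the interpolation formula preceding \eqref{eq:Lambda+interpolatedDef}, one checks directly that $A^1_{E-\widetilde f}(\widetilde\omega)=A_{E-\widetilde f}(\widetilde\omega)$ and $A^0_{E-\widetilde f}(\widetilde T\widetilde\omega)=I$. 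Thus the required equality reduces to
\begin{equation*}
A_{E-\widetilde f}(\widetilde\omega)\,\widetilde\Lambda^+(\widetilde\omega) \;=\; \widetilde\Lambda^+(\widetilde T\widetilde\omega),
\end{equation*}
which is precisely the invariance property \eqref{eq:expdich1} of the stable section.

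For continuity, I would assemble the following ingredients: (i) $\widetilde\Lambda^+:\widetilde\Omega\to\R\PP^1$ is continuous by the definition of exponential dichotomy; (ii) the map $(t,\widetilde\omega)\mapsto A^t_{E-\widetilde f}(\widetilde\omega)$ is jointly continuous on $\R\times\widetilde\Omega$, as a composition of smooth interpolations with the continuous cocycle $A_{E-\widetilde f}$; and (iii) the natural action $\mathrm{SL}(2,\R)\curvearrowright\R\PP^1$ is continuous. Composing these shows that $(\widetilde\omega,s)\mapsto A^s_{E-\widetilde f}(\widetilde\omega)\widetilde\Lambda^+(\widetilde\omega)$ is a continuous map $\widetilde\Omega\times[0,1]\to\R\PP^1$. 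Since this map was just shown to respect the equivalence relation defining $\widetilde X$, the universal property of the quotient topology yields a continuous factorization $\overline\Lambda^+:\widetilde X\to\R\PP^1$.

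There is no real obstacle here: the content is entirely the invariance of the stable section, and the rest is a routine gluing argument. The only point to be slightly careful about is confirming that the interpolation in \eqref{def:homotopy:identity} is normalized so that $Y_{E-\widetilde f}(\widetilde\omega,0)=I$ and $Y_{E-\widetilde f}(\widetilde\omega,1)=A_{E-\widetilde f}(\widetilde\omega)$, so that the endpoint values $A^0$ and $A^1$ come out as claimed; this is built into the choice of $\theta$ and $\lambda$.
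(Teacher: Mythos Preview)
Your proposal is correct and follows essentially the same approach as the paper: both reduce well-definedness to the invariance identity \eqref{eq:expdich1} applied at the gluing $(\widetilde\omega,1)\sim(\widetilde T\widetilde\omega,0)$. Your treatment of continuity via the universal property of the quotient is more explicit than the paper's one-line argument, but the underlying idea is identical.
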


\begin{claimproof}
By invariance, one has
\[ \widetilde\Lambda^+([\widetilde \omega,1]) 
= A^1(\widetilde\omega)\widetilde\Lambda^+(\widetilde\omega) 
= \widetilde \Lambda^+(\widetilde T \widetilde\omega) 
= \widetilde\Lambda^+([\widetilde T \widetilde\omega,0]), \]
which shows both that $\overline\Lambda^+$ is well-defined and that it is continuous.
\end{claimproof}

\begin{claim}
The map $\widetilde\Lambda^+$ depends only on the first coordinate of $\widetilde \omega$. That is, there exists a continuous map $\Lambda^+:\T \to \R\PP^1$ such that
\begin{equation}
\widetilde\Lambda^+(\omega,x,y) = \Lambda^+(\omega), \quad \forall (\omega,x,y) \in \widetilde\Omega.
\end{equation}
\end{claim}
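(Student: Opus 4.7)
The plan is to exploit the observation that the cocycle $A_{E-\widetilde f}$, and hence all its forward iterates $A^n_{E-\widetilde f}$ for $n \in \Z_+$, depend on $\widetilde\omega = (\omega,x,y) \in S$ only through the first coordinate $\omega$. This is precisely Lemma~\ref{l.cocyclescoincide}. Since the stable section of a uniformly hyperbolic $\mathrm{SL}(2,\R)$-cocycle is determined by forward-contraction behavior, it should inherit this invariance under changes of $(x,y)$.

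More concretely, I would first establish the dynamical characterization
$$ \widetilde\Lambda^+(\widetilde\omega) = \set{v \in \R^2 : \|A^n_{E-\widetilde f}(\widetilde\omega)\, v\| \to 0 \text{ as } n \to \infty}. $$
The inclusion $\subseteq$ is immediate from \eqref{eq:expdich2}. For the reverse inclusion, I would use that the unstable section $\widetilde\Lambda^-$ is also continuous and invariant, and that $A^{-n}_{E-\widetilde f}$ exponentially contracts vectors in $\widetilde\Lambda^-(\widetilde T^n\widetilde\omega)$; equivalently, $A^n_{E-\widetilde f}$ exponentially expands vectors in $\widetilde\Lambda^-(\widetilde\omega)$. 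Combined with the uniform lower bound on the angle between $\widetilde\Lambda^+$ and $\widetilde\Lambda^-$ afforded by continuity of both sections and the compactness of $\widetilde\Omega$, this rules out forward contraction for any vector having a nonzero $\widetilde\Lambda^-$-component.

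With this characterization in hand, Lemma~\ref{l.cocyclescoincide} immediately yields $\widetilde\Lambda^+(\omega,x,y) = \widetilde\Lambda^+(\omega,x',y')$ whenever $(\omega,x,y), (\omega,x',y') \in S$, so we can define $\Lambda^+(\omega)$ unambiguously as their common value. Continuity of $\Lambda^+$ would then follow from a standard compactness argument: if $\omega_n \to \omega$ in $\T$ and $\widetilde\omega_n \in S$ is any choice of lift (with $\pi_1(\widetilde\omega_n) = \omega_n$), every subsequential limit $\widetilde\omega'$ of $(\widetilde\omega_n)$ lies in $\pi_1^{-1}(\omega)$, so continuity of $\widetilde\Lambda^+$ yields $\Lambda^+(\omega_n) \to \widetilde\Lambda^+(\widetilde\omega') = \Lambda^+(\omega)$ along that subsequence; the usual sub-subsequence argument upgrades this to convergence along the full sequence.

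I expect the main obstacle to be the reverse inclusion in the dynamical characterization above, as it requires packaging the backward-contraction property of $\widetilde\Lambda^-$ into a forward-expansion estimate for the ambient cocycle and combining it with the uniform angle bound between the two invariant directions. Once this characterization is secured, the remaining steps---well-definedness of $\Lambda^+$ and its continuity---reduce to direct consequences of Lemma~\ref{l.cocyclescoincide} and the compactness of $S$.
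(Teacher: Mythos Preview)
Your argument is correct and follows essentially the same line as the paper's: characterize $\widetilde\Lambda^+(\widetilde\omega)$ purely in terms of the forward iterates $A^n_{E-\widetilde f}(\widetilde\omega)$, $n\ge 0$, and then invoke Lemma~\ref{l.cocyclescoincide} to conclude dependence only on $\omega$. The only difference is cosmetic: the paper cites an external result identifying $\widetilde\Lambda^+(\widetilde\omega)$ as the limit of the most contracted singular direction of $A^n_{E-\widetilde f}(\widetilde\omega)$, whereas you prove the equivalent forward-decay characterization directly and also spell out the continuity step.
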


\begin{claimproof} Consider $\widetilde\omega = (\omega,x,y) \in \widetilde\Omega$. By examining the proof of \cite[Theorem~1.2]{DFLY2016DCDS}, one sees that $\widetilde\Lambda^+(\widetilde\omega)$ is given by the limit of the most contracted direction of $A_{E-\widetilde f}^n(\widetilde\omega)$ as $n \to \infty$. By Lemma~\ref{l.cocyclescoincide}, this is then precisely the limit of the most contracted direction of $A^n_{E-f}(\omega)$ as $n\to \infty$, which then only depends on $\omega$. The claim follows. \end{claimproof}
\medskip

By the claim, $\overline\Lambda^+$ factors through $X$. Thus, $\mathfrak{A}_{\widetilde \nu}(\overline\Lambda^+) \in \Z$ by Theorem~\ref{t:solenoidThroughTCsharp}, which together with \eqref{eq:dsm}--\eqref{eq:ids} and \eqref {idsrn} implies that $\mathfrak{A}_{\widetilde \nu}(\overline\Lambda^+) \in \{0,1\}$. Thus, by \eqref{eq:ids} and \eqref{eq:sigmasuppdk}, we have $E <\min\Sigma$ or $E > \max \Sigma$, so it follows that $\Sigma$ has no interior gaps.
\end{proof}

\bibliographystyle{abbrvArXiv}

\bibliography{gapbib}

\begin{thebibliography}{10}

\bibitem{A14a}
A.~Avila.
\newblock The absolutely continuous spectrum of the almost {M}athieu operator.
\newblock  \href{https://arxiv.org/abs/0810.2965}{{\ttfamily arXiv:0810.2965}}.

\bibitem{AviDamZha2020}
A.~Avila, D.~Damanik, and Z.~Zhang.
\newblock {S}chr\"{o}dinger operators with potentials generated by hyperbolic
  transformations: I. {P}ositivity of the {L}yapunov exponent.
\newblock 2020.
\newblock  \href{https://arxiv.org/abs/2011.10146}{{\ttfamily
  arXiv:2011.10146}}.

\bibitem{AviJit2009Annals}
A.~Avila and S.~Jitomirskaya.
\newblock The {T}en {M}artini {P}roblem.
\newblock {\em Ann. of Math. (2)}, 170(1):303--342, 2009.
\newblock  \href{https://arxiv.org/abs/math/0503363}{{\ttfamily
  arXiv:math/0503363}}.

\bibitem{Bel1986}
J.~Bellissard.
\newblock {$K$}-theory of {$C^\ast$}-algebras in solid state physics.
\newblock In {\em Statistical mechanics and field theory: mathematical aspects
  ({G}roningen, 1985)}, volume 257 of {\em Lecture Notes in Phys.}, pages
  99--156. Springer, Berlin, 1986.
\newblock Edited by T. C. Dorlas, and N. M. Hugenholtz, and M. Winnink.

\bibitem{Bel1992b}
J.~Bellissard.
\newblock Gap labelling theorems for {S}chr\"{o}dinger operators.
\newblock In {\em From number theory to physics ({L}es {H}ouches, 1989)}, pages
  538--630. Springer, Berlin, 1992.
\newblock Edited by M. Waldschmidt, P. Moussa, J. M. Luck, and C. Itzykson.

\bibitem{Bjerkloev2020}
K.~Bjerkl\"{o}v.
\newblock Positive {L}yapunov exponent for some {S}chr\"{o}dinger cocycles over
  strongly expanding circle endomorphisms.
\newblock {\em Comm. Math. Phys.}, 379(1):353--360, 2020.

\bibitem{Bourgain2002}
J.~Bourgain.
\newblock On the spectrum of lattice {S}chr\"{o}dinger operators with
  deterministic potential.
\newblock volume~87, pages 37--75. 2002.
\newblock Dedicated to the memory of Thomas H. Wolff.

\bibitem{BourgainGoldsteinSchlag2001}
J.~Bourgain, M.~Goldstein, and W.~Schlag.
\newblock Anderson localization for {S}chr\"{o}dinger operators on {$\Bbb Z$}
  with potentials given by the skew-shift.
\newblock {\em Comm. Math. Phys.}, 220(3):583--621, 2001.

\bibitem{BourgainSchlag2000CMP}
J.~Bourgain and W.~Schlag.
\newblock Anderson localization for {S}chr\"{o}dinger operators on {$\bf Z$}
  with strongly mixing potentials.
\newblock {\em Comm. Math. Phys.}, 215(1):143--175, 2000.

\bibitem{BrinStuck2015Book}
M.~Brin and G.~Stuck.
\newblock {\em Introduction to dynamical systems}.
\newblock Cambridge University Press, Cambridge, 2015.
\newblock Corrected paper back edition of the 2002 original [ MR1963683].

\bibitem{ChulaevskySpencer1995}
V.~Chulaevsky and T.~Spencer.
\newblock Positive {L}yapunov exponents for a class of deterministic
  potentials.
\newblock {\em Comm. Math. Phys.}, 168(3):455--466, 1995.

\bibitem{ClimenhagaLuzzattoPesin2017}
V.~Climenhaga, S.~Luzzatto, and Y.~Pesin.
\newblock The geometric approach for constructing {S}inai-{R}uelle-{B}owen
  measures.
\newblock {\em J. Stat. Phys.}, 166(3-4):467--493, 2017.
\newblock  \href{https://arxiv.org/abs/1607.04685}{{\ttfamily
  arXiv:1607.04685}}.

\bibitem{ClimenhagaPesinZelerowicz2019}
V.~Climenhaga, Y.~Pesin, and A.~Zelerowicz.
\newblock Equilibrium states in dynamical systems via geometric measure theory.
\newblock {\em Bull. Amer. Math. Soc. (N.S.)}, 56(4):569--610, 2019.

\bibitem{ESO1}
D.~Damanik and J.~Fillman.
\newblock {\em One-dimensional ergodic Schr\"{o}dinger operators I. General
  theory}.
\newblock Graduate Studies in Mathematics. American Mathematical Society,
  Providence, RI.
\newblock In Press.

\bibitem{DFGap}
D.~Damanik and J.~Fillman.
\newblock Gap labelling for discrete one-dimensional ergodic {S}chr\"odinger
  operators.
\newblock 2022.
\newblock  \href{https://arxiv.org/abs/2203.03696}{{\ttfamily
  arXiv:2203.03696}}.

\bibitem{DFLY2016DCDS}
D.~Damanik, J.~Fillman, M.~Lukic, and W.~Yessen.
\newblock Characterizations of uniform hyperbolicity and spectra of {CMV}
  matrices.
\newblock {\em Discrete Contin. Dyn. Syst. Ser. S}, 9(4):1009--1023, 2016.
\newblock  \href{https://arxiv.org/abs/1409.6259}{{\ttfamily arXiv:1409.6259}}.

\bibitem{DamKil2005b}
D.~Damanik and R.~Killip.
\newblock Almost everywhere positivity of the {L}yapunov exponent for the
  doubling map.
\newblock {\em Comm. Math. Phys.}, 257(2):287--290, 2005.
\newblock  \href{https://arxiv.org/abs/math-ph/0405061}{{\ttfamily
  arXiv:math-ph/0405061}}.

\bibitem{EinsiedlerWard2011Book}
M.~Einsiedler and T.~Ward.
\newblock {\em Ergodic theory with a view towards number theory}, volume 259 of
  {\em Graduate Texts in Mathematics}.
\newblock Springer-Verlag London, Ltd., London, 2011.

\bibitem{HanLemmSchlag2020ETDS}
R.~Han, M.~Lemm, and W.~Schlag.
\newblock Effective multi-scale approach to the {S}chr\"{o}dinger cocycle over
  a skew-shift base.
\newblock {\em Ergodic Theory Dynam. Systems}, 40(10):2788--2853, 2020.
\newblock  \href{https://arxiv.org/abs/1803.02034}{{\ttfamily 1803.02034}}.

\bibitem{HanLemmSchlag2020JST}
R.~Han, M.~Lemm, and W.~Schlag.
\newblock Weyl sums and the {L}yapunov exponent for the skew-shift
  {S}chr\"{o}dinger cocycle.
\newblock {\em J. Spectr. Theory}, 10(4):1139--1172, 2020.
\newblock  \href{https://arxiv.org/abs/1807.00233}{{\ttfamily
  arXiv:1807.00233}}.

\bibitem{Johnson1986JDE}
R.~A. Johnson.
\newblock Exponential dichotomy, rotation number, and linear differential
  operators with bounded coefficients.
\newblock {\em J. Differential Equations}, 61(1):54--78, 1986.

\bibitem{KatokHassel1995Book}
A.~Katok and B.~Hasselblatt.
\newblock {\em Introduction to the modern theory of dynamical systems},
  volume~54 of {\em Encyclopedia of Mathematics and its Applications}.
\newblock Cambridge University Press, Cambridge, 1995.
\newblock With a supplementary chapter by Katok and Leonardo Mendoza.

\bibitem{Kruger2009}
H.~Kr\"{u}ger.
\newblock A family of {S}chr\"{o}dinger operators whose spectrum is an
  interval.
\newblock {\em Comm. Math. Phys.}, 290(3):935--939, 2009.
\newblock  \href{https://arxiv.org/abs/0809.3434}{{\ttfamily arXiv:0809.3434}}.

\bibitem{Kruger2012JSP}
H.~Kr\"{u}ger.
\newblock Concentration of eigenvalues for skew-shift {S}chr\"{o}dinger
  operators.
\newblock {\em J. Stat. Phys.}, 149(6):1096--1111, 2012.

\bibitem{Kruger2012JFA}
H.~Kr\"{u}ger.
\newblock The spectrum of skew-shift {S}chr\"{o}dinger operators contains
  intervals.
\newblock {\em J. Funct. Anal.}, 262(3):773--810, 2012.
\newblock  \href{https://arxiv.org/abs/1009.0913}{{\ttfamily arXiv:1009.0913}}.

\bibitem{Kruger2013}
H.~Kr\"{u}ger.
\newblock Orthogonal polynomials on the unit circle with {V}erblunsky
  coefficients defined by the skew-shift.
\newblock {\em Int. Math. Res. Not. IMRN}, (18):4135--4169, 2013.
\newblock  \href{https://arxiv.org/abs/1111.4019}{{\ttfamily arXiv:1111.4019}}.

\bibitem{Mihailescu2011}
E.~Mihailescu.
\newblock Asymptotic distributions of preimages for endomorphisms.
\newblock {\em Ergodic Theory Dynam. Systems}, 31(3):911--934, 2011.

\bibitem{PPGibbs}
D.~Parmenter and M.~Pollicott.
\newblock {G}ibbs measures for hyperbolic attractors defined by densities.
\newblock 2021.

\bibitem{Robert2000}
A.~M. Robert.
\newblock {\em A course in {$p$}-adic analysis}, volume 198 of {\em Graduate
  Texts in Mathematics}.
\newblock Springer-Verlag, New York, 2000.

\bibitem{Ruelle2004}
D.~Ruelle.
\newblock {\em Thermodynamic formalism}.
\newblock Cambridge Mathematical Library. Cambridge University Press,
  Cambridge, second edition, 2004.

\bibitem{Schwarzmann1957Annals}
S.~Schwartzman.
\newblock Asymptotic cycles.
\newblock {\em Ann. of Math. (2)}, 66:270--284, 1957.

\bibitem{Walters1982:ErgTh}
P.~Walters.
\newblock {\em An introduction to ergodic theory}, volume~79 of {\em Graduate
  Texts in Mathematics}.
\newblock Springer-Verlag, New York-Berlin, 1982.

\bibitem{Zhang2016}
Z.~Zhang.
\newblock Uniform positivity of the lyapunov exponent for monotone potentials
  generated by the doubling map.
\newblock {\em arXiv:1610.02137}.

\end{thebibliography}

\end{document}